\newtheorem{theorem}{Theorem}[section]
\newtheorem{proposition}{Proposition}
\newtheorem{remark}{Remark}[section]
\newtheorem{example}{Example}[section]
\newtheorem{corollary}{Corollary}[section]
\newcommand{\mymargin}[1]{\marginpar{\color{red}\tiny\ttfamily#1}}
\renewcommand{\mymargin}[1]{}
\title{\LARGE \bf On the Difficulty of Deciding Asymptotic Stability of\\ Cubic Homogeneous Vector Fields}
\author{Amir Ali Ahmadi
\thanks{Amir Ali Ahmadi is with the Laboratory for Computer Science and Artificial Intelligence, Department of Electrical Engineering and
Computer Science, Massachusetts Institute of Technology. E-mail:
\texttt{a\_a\_a@mit.edu}.  }
 }
\begin{document}

\maketitle
\thispagestyle{empty}
\pagestyle{empty}

\begin{abstract}
It is well-known that asymptotic stability (AS) of homogeneous
polynomial vector fields of degree one (i.e., linear systems) can
be decided in polynomial time e.g. by searching for a quadratic
Lyapunov function. Since homogeneous vector fields of even degree
can never be AS, the next interesting degree to consider is equal
to three. In this paper, we prove that deciding AS of homogeneous
cubic vector fields is strongly NP-hard and pose the question of
determining whether it is even decidable. As a byproduct of the
reduction that establishes our NP-hardness result, we obtain a
Lyapunov-inspired technique for proving positivity of forms. We
also show that for asymptotically stable homogeneous cubic vector
fields in as few as two variables, the minimum degree of a
polynomial Lyapunov function can be arbitrarily large. Finally, we
show that there is no monotonicity in the degree of polynomial
Lyapunov functions that prove AS; i.e., a homogeneous cubic vector
field with no homogeneous polynomial Lyapunov function of some
degree $d$ can very well have a homogeneous polynomial Lyapunov
function of degree less than $d$.
\end{abstract}

\section{Introduction}
\subsection{Background}
We are concerned in this paper with a continuous time dynamical
system
\begin{equation}\label{eq:CT.dynamics}
\dot{x}=f(x),
\end{equation}
where $f:\mathbb{R}^n\rightarrow\mathbb{R}^n$ is a polynomial and
has an equilibrium at the origin, i.e., $f(0)=0$. Polynomial
differential equations appear ubiquitously in engineering and
sciences either as true models of physical systems, or as
approximations to other families of nonlinear dynamics. The
problem of deciding stability of equilibrium points of such
systems is of fundamental importance in control theory. The goal
of this paper is to demonstrate some of the difficulties
associated with answering stability questions about polynomial
vector fields in terms of both computational complexity and
non-existence of ``simple'' Lyapunov functions, even if one limits
attention to very restricted settings.

The notion of stability of interest in this paper is (local or
global) \emph{asymptotic stability}. The origin of
(\ref{eq:CT.dynamics}) is said to be \emph{stable in the sense of
Lyapunov} if for every $\epsilon>0$, there exists a
$\delta=\delta(\epsilon)>0$ such that $$||x(0)||<\delta\
\Rightarrow ||x(t)||<\epsilon, \ \ \forall t\geq0.$$ We say that
the origin is asymptotically stable (AS) if it is stable in the
sense of Lyapunov and $\delta$ can be chosen such that
$$||x(0)||<\delta \ \Rightarrow \
\lim_{t\rightarrow\infty}x(t)=0.$$ The origin is \emph{globally
asymptotically stable} (GAS) if it is stable in the sense of
Lyapunov and $\forall x(0)\in\mathbb{R}^n$,
$\lim_{t\rightarrow\infty}x(t)=0.$

The degree of the vector field in (\ref{eq:CT.dynamics}) is
defined to be the largest degree of the components of $f$. Our
focus in this paper is on \emph{homogeneous} polynomial vector
fields. A scalar valued function
$p:\mathbb{R}^n\rightarrow\mathbb{R}$ is said to be homogeneous
(of degree $d$) if it satisfies $p(\lambda x)=\lambda^d p(x)$ for
all $x\in\mathbb{R}^n$ and all $\lambda\in\mathbb{R}$. A
homogeneous polynomial is also called a \emph{form}. All monomials
of a form share the same degree. We say that the vector field $f$
in (\ref{eq:CT.dynamics}) is homogeneous if all components of $f$
are forms of the same degree. Homogeneous systems are extensively
studied in the literature on nonlinear control; see
e.g.~\cite{Stability_homog_poly_ODE}, \cite{Stabilize_Homog},
\cite{homog.feedback}, \cite{Baillieul_Homog_geometry},
\cite{Cubic_Homog_Planar}, \cite{HomogHomog},
\cite{homog.systems}. Since our results are negative in nature,
their validity for homogeneous polynomial systems obviously also
implies their validity for all polynomial systems.

A basic fact about homogeneous vector fields is that for these
systems the notions of local and global asymptotic stability are
equivalent. Indeed, a homogeneous vector field of degree $d$
satisfies $f(\lambda x)=\lambda^d f(x)$ for any scalar $\lambda$,
and therefore the value of $f$ on the unit sphere determines its
value everywhere. It is also well-known that an asymptotically
stable homogeneous system admits a homogeneous Lyapunov
function~\cite[Sec. 57]{Hahn_stability_book},\cite{HomogHomog}.

\subsection{An open question of Arnold}

It is natural to ask whether stability of equilibrium points of
polynomial vector fields can be decided in finite time. In fact,
this is a well-known question of Arnold that appears
in~\cite{Arnold_Problems_for_Math}:

\vspace{-5pt}
\begin{itemize}
\item[] ``Is the stability problem for stationary points
algorithmically decidable? The well-known Lyapunov
theorem\footnote{The theorem that Arnold is referring to here is
the indirect method of Lyapunov related to linearization. This is
not to be confused with Lyapunov's direct method (or the second
method), which is what we are concerned with in sections that
follow.} solves the problem in the absence of eigenvalues with
zero real parts. In more complicated cases, where the stability
depends on higher order terms in the Taylor series, there exists
no algebraic criterion.

Let a vector field be given by polynomials of a fixed degree, with
rational coefficients. Does an algorithm exist, allowing to
decide, whether the stationary point is stable?''
\end{itemize}
\vspace{-5pt}

%
%

To our knowledge, there has been no formal resolution to this
question, neither for the case of stability in the sense of
Lyapunov, nor for the case of asymptotic stability (in its local
or global version). In~\cite{Costa_Doria_undecidabiliy}, da Costa
and Doria show that if the right hand side of the differential
equation contains elementary functions (sines, cosines,
exponentials, absolute value function, etc.), then there is no
algorithm for deciding whether the origin is stable or unstable.
They also present a dynamical system in~\cite{Costa_Doria_Hopf}
where one cannot decide whether a Hopf bifurcation will occur or
whether there will be parameter values such that a stable fixed
point becomes unstable.
%
%
A relatively larger number of undecidability results are available
for questions related to other properties of polynomial vector
fields, such as
reachability~\cite{Undecidability_vec_fields_survey} or
boundedness of domain of
definition~\cite{Bounded_Defined_ODE_Undecidable}, or for
questions about stability of hybrid systems~\cite{TsiLinSat},
\cite{BlTi2}, \cite{BlTi_stab_contr_hybrid},
\cite{Deciding_stab_mortal_PWA}. We refer the interested reader to
the survey papers in~\cite{Survey_CT_Computation},
\cite{Undecidability_vec_fields_survey},
\cite{Sontag_complexity_comparison},
\cite{BlTi_complexity_3classes}, \cite{BlTi1}.


We are also interested to know whether the problem of deciding
asymptotic stability of homogeneous polynomial vector fields is
undecidable for some fixed degree, say, equal to $3$. The answer
to such decidability questions, or at least the level of
difficulty associated with proving such results, can depend in a
subtle way on the exact criteria in question. For example, it has
been known for a while that the question of determining
boundedness of trajectories for arbitrarily switched linear
systems is undecidable~\cite{BlTi2} even when one restricts
attention to switched systems defined by nonnegative matrices. On
the other hand, the complexity of testing asymptotic stability for
the same class of systems remains open and in fact is conjectured
to be decidable~\cite{Jungers_Blondel_FP_Rational}.

\subsection{Existence of polynomial Lyapunov functions}

For stability analysis of polynomial vector fields, it is most
common (and quite natural) to search for Lyapunov functions that
are polynomials themselves. This approach has become further
prevalent over the past decade due to the fact that techniques
from sum of squares optimization~\cite{PhD:Parrilo} have provided
for algorithms that given a polynomial system can efficiently
search for a polynomial Lyapunov function~\cite{PhD:Parrilo},
\cite{PapP02}. The question is therefore naturally motivated to
determine whether stable polynomial systems always admit
polynomial Lyapunov functions, and whether one can give upper
bounds on the degree of such Lyapunov functions in cases when they
do exist. A study of questions of this type for different notions
of stability has recently been carried out
in~\cite{Peet.exp.stability},
\cite{Peet.Antonis.converse.sos.journal},
\cite{AAA_MK_PP_CDC11_no_Poly_Lyap},
\cite{AAA_PP_CDC11_converseSOS_Lyap}, \cite[Chap. 4]{AAA_PhD}. In
this paper, we continue this line of research by studying the case
where the vector field is homogeneous.

Throughout this paper, by a (polynomial) Lyapunov function for
(\ref{eq:CT.dynamics}), we mean a positive definite polynomial
function $V$ whose derivatives $\dot{V}$ along trajectories of
(\ref{eq:CT.dynamics}) is negative definite; i.e., a function $V$
satisfying
\begin{eqnarray}
V(x)&>&0\quad \forall x\neq0 \label{eq:V.positive} \\
\dot{V}(x)=\langle\nabla V(x),f(x)\rangle&<&0\quad \forall x\neq0.
\label{eq:Vdot.negative}
\end{eqnarray}
Here, $\nabla V(x)$ denotes the gradient vector of $V$, and
$\langle .,. \rangle$ is the standard inner product in
$\mathbb{R}^n$. If such a $V$ is also radially unbounded, then the
inequalities in (\ref{eq:V.positive}) and (\ref{eq:Vdot.negative})
imply that the origin of (\ref{eq:CT.dynamics}) is GAS. When the
dynamics $f$ is homogeneous, we can restrict our search to
homogeneous polynomials. Such a Lyapunov function is automatically
radially unbounded and proves (local or equivalently global)
asymptotic stability of the homogeneous vector field.

Naturally, questions regarding complexity of deciding asymptotic
stability and questions about existence of Lyapunov functions are
related. For instance, if one proves that for a class of
polynomial vector fields, asymptotic stability implies existence
of a polynomial Lyapunov function together with a computable upper
bound on its degree, then the question of asymptotic stability for
that class becomes decidable. This is due to the fact that given a
polynomial system and an integer $d$, the question of deciding
whether the system admits a polynomial Lyapunov function of degree
$d$ can be answered in finite time using quantifier
elimination~\cite{Tarski_quantifier_elim},~\cite{Seidenberg_quantifier_elim}.

For the case of linear systems (i.e., homogeneous systems of
degree $1$), the situation is particularly nice. If such a system
is asymptotically stable, then there always exists a
\emph{quadratic} Lyapunov function. Asymptotic stability of a
linear system $\dot{x}=Ax$ is equivalent to the easily checkable
algebraic criterion that the eigenvalues of $A$ be in the open
left half complex plane. Deciding this property of the matrix $A$
can formally be done in polynomial time, e.g. by solving a
Lyapunov equation~\cite{BlTi1}.

Moving up in the degree, it is not difficult to show that if a
homogeneous polynomial vector field has even degree, then it can
never be asymptotically stable; see e.g.~\cite[p.
283]{Hahn_stability_book}. So the next interesting case occurs for
homogeneous vector fields of degree $3$. We will prove three
results in this paper which demonstrate that already for cubic
homogeneous systems, the situation is significantly more complex
than it is for linear systems. We outline our contributions next.

\subsection{Contributions and organization of this paper}
In Section~\ref{sec:NP-hard.proof}, we prove that determining
asymptotic stability for homogeneous cubic vector fields is
strongly NP-hard (Theorem~\ref{thm:asym.stability.nphard}).
Although this of course does not resolve the question of Arnold,
the result gives a lower bound on the complexity of this problem.
It is an interesting open question to investigate whether in this
specific setting, the problem is also undecidable.

The implication of the NP-hardness of this problem is that unless
P=NP, it is impossible to design an algorithm that can take as
input the (rational) coefficients of a homogeneous cubic vector
field, have running time bounded by a polynomial in the number of
bits needed to represent these coefficients, and always output the
correct yes/no answer on asymptotic stability. Moreover, the fact
that our NP-hardness result is in the strong sense (as opposed to
weakly NP-hard problems such as KNAPSACK, SUBSET SUM, etc.)
implies that the problem remains NP-hard even if the size (bit
length) of the coefficients is $O(\log n)$, where $n$ is the
dimension. For a strongly NP-hard problem, even a
pseudo-polynomial time algorithm cannot exist unless P=NP.
See~\cite{GareyJohnson_Book} for precise definitions and more
details.

In Section~\ref{sec:NP-hard.proof}, we also present a
Lyapunov-inspired technique for proving positivity of forms that
comes directly out of the reduction in the proof of our
NP-hardness result (Corollary~\ref{cor:positivity.forms.Lyap}). We
show the potential advantages of this technique over standard sum
of squares techniques on an example
(Example~\ref{ex:form.pos.Lyap.inspired}).


In Section~\ref{sec:no.finite.bound}, we prove that unlike AS
linear systems that always admit quadratic Lyapunov functions, AS
cubic homogeneous systems may need polynomial Lyapunov functions
of arbitrarily large degree, even when the dimension is fixed to
$2$ (Theorem~\ref{thm:no.finite.bound}). Finally, in
Section~\ref{sec:no.monotonicity.in.degree}, we show that there is
no monotonicity in the degree of homogeneous polynomial Lyapunov
functions for homogeneous cubic vector fields. We give an example
of such a vector field which admits a homogeneous polynomial
Lyapunov function of degree $4$ but not one of degree $6$
(Theorem~\ref{thm:no.monotonicity}).

\section{NP-hardness of deciding asymptotic stability of homogeneous cubic vector
fields}\label{sec:NP-hard.proof}




The main result of this section is the following theorem.

\begin{theorem}\label{thm:asym.stability.nphard}
Deciding asymptotic stability of homogeneous cubic polynomial
vector fields is strongly NP-hard.
\end{theorem}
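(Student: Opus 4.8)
The plan is to reduce from a known strongly NP-hard problem. A natural candidate is the problem of deciding whether a given (even-degree, in fact quartic) form is positive definite, or equivalently the problem of deciding matrix copositivity, or one-in-three SAT / MAX-CUT type combinatorial problems; the cleanest route here is to start from the strong NP-hardness of deciding whether a biquadratic (or general quartic) form $p(x)$ is positive definite — a fact that is classical (it contains matrix copositivity, which is strongly NP-hard). So the target is: given a form $p$, produce in polynomial time a homogeneous cubic vector field $f$ such that the origin of $\dot x = f(x)$ is asymptotically stable if and only if $p$ is positive definite.

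The key idea is to run Lyapunov's direct method \emph{backwards}. Suppose $p$ were a Lyapunov function; then along trajectories of a cubic vector field $f$ we would need $\langle \nabla p(x), f(x)\rangle < 0$ for all $x\neq 0$. So first I would fix the candidate Lyapunov function to be a \emph{quadratic} form — say $V(x) = \tfrac12 \|x\|^2$ or, better, a quadratic form chosen so that $\nabla V$ is a nice linear map — and then try to \emph{construct} a cubic $f$ whose directional derivative against $\nabla V$ is (the negative of) the given form $p$. Concretely, if $V(x) = \tfrac12\|x\|^2$ then $\langle \nabla V(x), f(x)\rangle = \langle x, f(x)\rangle$, and I want to engineer $f$ cubic with $\langle x, f(x)\rangle = -p(x)$ where $p$ is a quartic form. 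One systematic way: write $f(x) = -p(x)\, x / \|x\|^2$? — but that is not polynomial. Instead, use the classical trick of taking $f(x) = (\nabla p(x) \times \text{something}) $; more robustly, in $n$ variables one can add one extra coordinate and use a "gradient-like" construction. The cleanest concrete construction I would pursue: given the quartic $p$ in variables $x_1,\dots,x_n$, homogenize/pad so that there is a cubic vector field $f$ on $\mathbb{R}^{n}$ (or $\mathbb{R}^{n+1}$) of the form $f_i(x) = -x_i\, q(x) + g_i(x)$ where $q$ is a quadratic form and $g$ is chosen so that $\langle x, f(x)\rangle = -(x^\top x)\, q(x) + \langle x, g(x)\rangle$, and then force this to equal $-\|x\|^2 \cdot(\text{positive thing})$ exactly when $p\succ 0$.

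The real work — and the main obstacle — is to set up the reduction so that the \emph{if and only if} is tight in \emph{both} directions: (i) if $p$ is positive definite, the constructed $f$ is genuinely asymptotically stable, which requires exhibiting an honest strict Lyapunov function (the padding/extra variables must not create a center manifold or a non-strict direction); and (ii) if $p$ is \emph{not} positive definite — say $p(x_0)\le 0$ for some $x_0\neq0$ — then $f$ must fail to be asymptotically stable, which means producing an actual trajectory that does not converge to the origin (e.g. showing $\langle x,f(x)\rangle \ge 0$ along the ray through $x_0$, or that some sublevel set is invariant and the origin is not attractive on it). Direction (i) is usually the easy half once the construction is right; direction (ii) is delicate because "not positive definite" only gives a single bad point, and one must propagate that into a persistent trajectory. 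I would handle (ii) by arranging the construction so that a single point with $p(x_0)\le 0$ forces $\langle \nabla V, f\rangle \ge 0$ on an entire ray or invariant cone, using homogeneity (degree-$4$ scaling of $\langle x,f(x)\rangle$) to conclude that the candidate trajectory stays bounded away from $0$. Finally, since all coefficients produced are polynomial-size (indeed the reduction multiplies coefficients by small integers), and copositivity is NP-hard even with $O(\log n)$-bit entries, the reduction preserves strong NP-hardness; a short argument at the end will confirm the size bound and that the dynamics stays cubic and homogeneous.
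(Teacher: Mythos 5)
Your high-level plan matches the paper's: reduce positivity of a quartic form $p$ to asymptotic stability of a cubic homogeneous vector field, and reason via Lyapunov theory. (The paper additionally establishes, via ONE-IN-THREE 3SAT, that quartic positive definiteness is strongly NP-hard, which you correctly note is classical.) However, there is a genuine gap in your construction, and you in fact flag it yourself: you fix the Lyapunov function to be quadratic, $V(x)=\tfrac12\|x\|^2$, and try to engineer a cubic $f$ with $\langle x,f(x)\rangle=-p(x)$. Direction (i) (if $p$ is PD then $f$ is GAS) is then immediate. But direction (ii) does not go through: if $p$ is merely nonnegative with a nontrivial zero $x_0$, you get $\langle x_0,f(x_0)\rangle=0$, i.e.\ $\dot V(x_0)=0$ for \emph{this particular} $V$, which says nothing about stability --- the system could still be GAS with a different Lyapunov function, and the ray through $x_0$ need not be invariant (the trajectory can leave it and then spiral in). Your proposed fix --- forcing $\dot V\ge 0$ on a ray and invoking homogeneity --- still only constrains one scalar function $V$ along one direction, not an actual trajectory of $f$; the constraint $\langle x,f(x)\rangle=-p(x)$ leaves the tangential part of $f$ entirely free, and that freedom is exactly what could restore stability.

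The paper closes this gap by choosing a different construction in which the converse direction becomes essentially free: it takes the gradient flow $f=-\nabla p$ and uses $p$ \emph{itself} (not $\|x\|^2$) as the Lyapunov function. Then $\dot p=-\|\nabla p\|^2\le 0$ holds unconditionally, regardless of whether $p$ is PD. This non-positivity, valid everywhere, is what powers the converse: if the flow is GAS and $p(\hat x)<0$ somewhere, then $p$ stays $\le p(\hat x)<0$ along the trajectory from $\hat x$, contradicting $p(x(t))\to p(0)=0$; and if $p\ge 0$ with $p(x^*)=0$ for some $x^*\ne 0$, then $x^*$ is a global minimum of $p$, so $\nabla p(x^*)=0$, i.e.\ $x^*$ is an equilibrium of $\dot x=-\nabla p(x)$, again contradicting GAS. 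The forward direction uses Euler's identity $p(x)=\tfrac14 x^\top\nabla p(x)$ to see that $p$ PD forces $\nabla p(x)\ne 0$ for $x\ne 0$, so $\dot p<0$ off the origin. In short: keeping the unknown quartic as the Lyapunov function and using a gradient system makes $\dot V\le 0$ automatic, which is precisely the property your converse argument was missing. You brushed against this option (you mention gradient-like constructions) but did not commit to $f=-\nabla p$, which is the step that makes the reduction airtight.
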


The key idea behind the proof of this theorem is the following: We
will relate the solution of a combinatorial problem not to the
behavior of the trajectories of a cubic vector field that are hard
to get a handle on, but instead to properties of a Lyapunov
function that proves asymptotic stability of this vector field. As
we will see shortly, insights from Lyapunov theory make the proof
of this theorem quite simple. The reduction is broken into two
steps:
\begin{center}
ONE-IN-THREE 3SAT \\ $\downarrow$ \\ positivity of quartic forms
\\ $\downarrow$\\  asymptotic stability of cubic vector fields
\end{center}


\subsection{Reduction from ONE-IN-THREE 3SAT to positivity of quartic forms}

A form $q$ is said to be \emph{nonnegative} or \emph{positive
semidefinite} if $q(x)\geq 0$ for all $x$ in $\mathbb{R}^n$. We
say that a form $q$ is \emph{positive definite} if $q(x)>0$ for
all $x\neq 0$ in $\mathbb{R}^n$. (Note that forms necessarily
vanish at the origin.) It is well-known that deciding
nonnegativity of quartic forms is NP-hard; see
e.g.~\cite{nonnegativity_NP_hard}
and~\cite{deKlerk_StableSet_copositive}.
%
%
For reasons that will become clear shortly, we are interested
instead in showing hardness of deciding positive definiteness of
quartic forms. This is in some sense even easier to accomplish. A
very straightforward reduction from 3SAT proves NP-hardness of
deciding positive definiteness of polynomials of degree $6$. By
using ONE-IN-THREE 3SAT instead, we will reduce the degree of the
polynomial from $6$ to $4$.

\begin{proposition}\label{prop:positivity.quartic.NPhard}
It is strongly\footnote{The NP-hardness results of this section
will all be in the strong sense. From here on, we drop the prefix
``strong'' for brevity.} NP-hard to decide whether a homogeneous
polynomial of degree $4$ is positive definite.
\end{proposition}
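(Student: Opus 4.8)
The plan is to reduce from ONE-IN-THREE 3SAT, which is known to be NP-complete. Recall that an instance of ONE-IN-THREE 3SAT consists of a set of clauses, each containing exactly three literals, and we ask whether there is a truth assignment to the Boolean variables such that \emph{exactly one} literal per clause is true. Given such an instance on Boolean variables $u_1,\dots,u_n$, I would construct a quartic form in real variables $x_1,\dots,x_n$ (and possibly a few auxiliary variables) that is positive definite if and only if the instance is \emph{not} satisfiable, so that an algorithm for deciding positive definiteness would decide the complement of ONE-IN-THREE 3SAT and hence (by closure of NP-hardness under complement for decision problems) settle the original problem.

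The key gadget is to encode Boolean values as $\pm 1$ via the degree-$4$ term $\sum_{i=1}^n (x_i^2-1)^2$, which vanishes exactly on the Boolean cube $\{-1,1\}^n$ — except that this expression is not homogeneous. To homogenize, I would introduce an extra variable $x_0$ and work with $\sum_i (x_i^2 - x_0^2)^2$, whose zero set (intersected with $x_0\neq 0$) is, up to scaling, the Boolean cube. For each clause, say on literals corresponding to $u_i, u_j, u_k$, the ONE-IN-THREE condition "exactly one of the three is true" translates, under the identification $\text{true}\leftrightarrow x_\ell = x_0$ (and $\text{false}\leftrightarrow x_\ell = -x_0$, with negated literals flipping the sign), into a linear equation of the form $\pm x_i \pm x_j \pm x_k = \mp x_0$, equivalently $(\pm x_i \pm x_j \pm x_k + x_0) = 0$ after rearranging. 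Squaring each such clause-expression gives a quartic (in fact quadratic, so I would multiply by $x_0^2$ or fold it into the homogeneous construction) term that vanishes precisely when the clause is satisfied in the one-in-three sense. Summing the Boolean-cube penalty and the clause penalties yields a nonnegative quartic form $q$ whose only possible zeros off the origin correspond exactly to satisfying assignments; I then need to massage $q$ so that it is moreover positive definite precisely when there are \emph{no} such zeros, and add a small positive-definite correction (e.g. an $\eps$ times a positive definite form, or a careful argument that the remaining zero directions are controlled) to ensure that the only way $q$ fails to be positive definite is through a genuine satisfying assignment rather than through spurious zeros at $x_0 = 0$.

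The main obstacle, and the part requiring the most care, is handling the homogenization cleanly: I must ensure that (i) the slice $x_0 = 0$ does not introduce extra nontrivial zeros of $q$ (otherwise $q$ is never positive definite and the reduction is vacuous), and (ii) the correspondence between zeros of $q$ with $x_0\neq 0$ and satisfying assignments is exact, including that negated literals are encoded with the right signs and that "exactly one true" rather than "at least one true" is what gets enforced — this is precisely why ONE-IN-THREE 3SAT is used rather than plain 3SAT, since the one-in-three constraint is an affine-linear equality (sum of the three $\pm 1$ values equals $-1$) that squares to a quadratic, keeping the total degree at $4$, whereas a disjunction would naturally cost degree $6$. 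I would verify (i) by checking that on $\{x_0 = 0\}$ the term $\sum_i (x_i^2 - x_0^2)^2 = \sum_i x_i^4$ already forces $x = 0$, so no spurious zeros arise there; this makes the $x_0=0$ slice harmless.

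Finally I would assemble the argument: the constructed form $q$ has size polynomial in the instance size (indeed the coefficients are bounded integers, so the hardness is in the strong sense), $q$ is positive definite if and only if the ONE-IN-THREE 3SAT instance is unsatisfiable, and since ONE-IN-THREE 3SAT is strongly NP-complete and the class of NP-hard problems is closed under complementation at the level of decision problems, deciding positive definiteness of quartic forms is strongly NP-hard.
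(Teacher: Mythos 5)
Your proposal is correct and follows essentially the same approach as the paper: reduce from ONE-IN-THREE 3SAT by forming a quartic sum of squares consisting of a Boolean-cube penalty plus squared affine clause constraints (the one-in-three condition being an affine \emph{equality} is precisely what keeps the degree at four), and then verify that the homogenizing hyperplane contributes no spurious zeros beyond the origin. The only cosmetic difference is your $\pm1$ Boolean encoding and building the form homogeneous from the start, whereas the paper uses the $\{0,1\}$ encoding on the affine chart and homogenizes afterward via $p_h(x,y)=y^4\,p(x/y)$; also note that the optional ``add an $\eps$ positive-definite correction'' you float would actually break the equivalence (it makes the form unconditionally positive definite), but you correctly discard it in favor of checking the $x_0=0$ slice directly.
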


\begin{proof}
We give a reduction from ONE-IN-THREE 3SAT which is known to be
NP-complete~\cite[p. 259]{GareyJohnson_Book}. Recall that in
ONE-IN-THREE 3SAT, we are given a 3SAT instance (i.e., a
collection of clauses, where each clause consists of exactly three
literals, and each literal is either a variable or its negation)
and we are asked to decide whether there exists a $\{0,1\}$
assignment to the variables that makes the expression true with
the additional property that each clause has \emph{exactly one}
true literal.

To avoid introducing unnecessary notation, we present the
reduction on a specific instance. The pattern will make it obvious
that the general construction is no different. Given an instance
of ONE-IN-THREE 3SAT, such as the following
\begin{equation}\label{eq:reduciton.1-in-3.3sat.instance}
(x_1\vee\bar{x}_2\vee x_4)\wedge (\bar{x}_2\vee\bar{x}_3\vee
x_5)\wedge (\bar{x}_1\vee x_3\vee \bar{x}_5)\wedge (x_1\vee
x_3\vee x_4),
\end{equation}
we define the quartic polynomial $p$ as follows:
\begin{equation}\label{eq:reduction.p}
\begin{array}{lll}
p(x)&=&\sum_{i=1}^5 x_i^2(1-x_i)^2\\ \ &\
&+(x_1+(1-x_2)+x_4-1)^2+((1-x_2)\\ \ &\ &+(1-x_3)+x_5-1)^2
\\ \ &\ &+((1-x_1)+x_3+(1-x_5)-1)^2\\ \ &\
&+(x_1+x_3+x_4-1)^2.
\end{array}
\end{equation}
Having done so, our claim is that $p(x)>0$ for all $x\in
\mathbb{R}^5$ (or generally for all $x\in \mathbb{R}^n$) if and
only if the ONE-IN-THREE 3SAT instance is not satisfiable. Note
that $p$ is a sum of squares and therefore nonnegative. The only
possible locations for zeros of $p$ are by construction among the
points in $\{0,1\}^5$. If there is a satisfying Boolean assignment
$x$ to (\ref{eq:reduciton.1-in-3.3sat.instance}) with exactly one
true literal per clause, then $p$ will vanish at point $x$.
Conversely, if there are no such satisfying assignments, then for
any point in $\{0,1\}^5$, at least one of the terms in
(\ref{eq:reduction.p}) will be positive and hence $p$ will have no
zeros.

It remains to make $p$ homogeneous. This can be done via
introducing a new scalar variable $y$. If we let
\begin{equation}\label{eq:reduction.ph}
p_h(x,y)=y^4 p(\textstyle{\frac{x}{y}}),
\end{equation}
then we claim that $p_h$ (which is a quartic form) is positive
definite if and only if $p$ constructed as in
(\ref{eq:reduction.p}) has no zeros.\footnote{In general, the
homogenization operation in (\ref{eq:reduction.ph}) does not
preserve positivity. For example, as shown in~\cite{Reznick}, the
polynomial $x_1^2+(1-x_1x_2)^2$ has no zeros, but its
homogenization $x_1^2y^2+(y^2-x_1x_2)^2$ has zeros at the points
$(1,0,0)^T$ and $(0,1,0)^T$. Nevertheless, positivity is preserved
under homogenization for the special class of polynomials
constructed in this reduction, essentially because polynomials of
type (\ref{eq:reduction.p}) have no zeros at infinity.} Indeed, if
$p$ has a zero at a point $x$, then that zero is inherited by
$p_h$ at the point $(x,1)$. If $p$ has no zeros, then
(\ref{eq:reduction.ph}) shows that $p_h$ can only possibly have
zeros at points with $y=0$. However, from the structure of $p$ in
(\ref{eq:reduction.p}) we see that
$$p_h(x,0)=x_1^4+\cdots+x_5^4,$$ which cannot be zero (except at
the origin). This concludes the proof.
\end{proof}

\subsection{Reduction from positivity of quartic forms to asymptotic stability of cubic vector fields}
We now present the second step of the reduction and finish the
proof of Theorem~\ref{thm:asym.stability.nphard}.

\begin{proof}[Proof of Theorem~\ref{thm:asym.stability.nphard}]
We give a reduction from the problem of deciding positive
definiteness of quartic forms, whose NP-hardness was established
in Proposition~\ref{prop:positivity.quartic.NPhard}. Given a
quartic form $V\mathrel{\mathop:}=V(x)$, we define the polynomial
vector field
\begin{equation}\label{eq:xdot.cubic.reduction}
\dot{x}=-\nabla V(x).
\end{equation}
Note that the vector field is homogeneous of degree $3$. We claim
that the above vector field is (locally or equivalently globally)
asymptotically stable if and only if $V$ is positive definite.
First, we observe that by construction
\begin{equation}\label{eq:Vdot<=0.always}
\dot{V}(x)=\langle \nabla V(x), \dot{x} \rangle=-||\nabla
V(x)||^2\leq 0.
\end{equation}
Suppose $V$ is positive definite. By Euler's identity for
homogeneous functions,\footnote{Euler's identity is easily derived
by differentiating both sides of the equation $V(\lambda
x)~=~\lambda^d V(x)$ with respect to $\lambda$ and setting
$\lambda=1$.} we have $V(x)=\frac{1}{4}x^T\nabla V(x).$ Therefore,
positive definiteness of $V$ implies that $\nabla V(x)$ cannot
vanish anywhere except at the origin. Hence, $\dot{V}(x)<0$ for
all $x\neq 0$. In view of Lyapunov's theorem (see e.g.~\cite[p.
124]{Khalil:3rd.Ed}), and the already mentioned fact that a
positive definite homogeneous function is radially unbounded, it
follows that the system in (\ref{eq:xdot.cubic.reduction}) is
globally asymptotically stable.

For the converse direction, suppose
(\ref{eq:xdot.cubic.reduction}) is GAS. Our first claim is that
global asymptotic stability together with $\dot{V}(x)\leq 0$
implies that $V$ must be positive semidefinite. This follows from
the following simple argument, which we have also previously
presented in~\cite{AAA_PP_ACC11_Lyap_High_Deriv} for a different
purpose. Suppose for the sake of contradiction that for some
$\hat{x}\in\mathbb{R}^n$ and some $\epsilon>0,$ we had
$V(\hat{x})=-\epsilon<0$. Consider a trajectory $x(t;\hat{x})$ of
system (\ref{eq:xdot.cubic.reduction}) that starts at initial
condition $\hat{x}$, and let us evaluate the function $V$ on this
trajectory. Since $V(\hat{x})=-\epsilon$ and $\dot{V}(x)\leq 0$,
we have $V(x(t;\hat{x}))\leq-\epsilon$ for all $t>0$. However,
this contradicts the fact that by global asymptotic stability, the
trajectory must go to the origin, where $V$, being a form,
vanishes.

To prove that $V$ is positive definite, suppose by contradiction
that for some nonzero point $x^*\in\mathbb{R}^n$ we had
$V(x^*)=0$. Since we just proved that $V$ has to be positive
semidefinite, the point $x^*$ must be a global minimum of $V$.
Therefore, as a necessary condition of optimality, we should have
$\nabla V(x^*)=0$. But this contradicts the system in
(\ref{eq:xdot.cubic.reduction}) being GAS, since the trajectory
starting at $x^*$ stays there forever and can never go to the
origin.
\end{proof}

Perhaps of independent interest, the reduction we just gave
suggests a method for proving positive definiteness of forms.
Given a form $V$, we can construct a dynamical system as in
(\ref{eq:xdot.cubic.reduction}), and then any method that we may
have for proving stability of vector fields (e.g. the use of
various kinds of Lyapunov functions) can serve as an algorithm for
proving positivity of $V$. In particular, if we use a polynomial
Lyapunov function $W$ to prove stability of the system in
(\ref{eq:xdot.cubic.reduction}), we get the following corollary.

\begin{corollary}\label{cor:positivity.forms.Lyap}
Let $V$ and $W$ be two forms of possibly different degree. If $W$
is positive definite, and $\langle \nabla W, \nabla V \rangle$ is
positive definite, then $V$ is positive definite.
\end{corollary}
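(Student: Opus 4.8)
The plan is to deduce this corollary directly from the reduction in the proof of Theorem~\ref{thm:asym.stability.nphard}, using $W$ as a Lyapunov function for the cubic vector field associated to $V$. First I would form the homogeneous vector field $\dot{x}=-\nabla V(x)$, which by Euler's identity satisfies $V(x)=\frac{1}{4}x^T\nabla V(x)$, so that (as in the theorem) the origin is the only possible equilibrium candidate we need to rule out; more to the point, positive definiteness of $V$ is \emph{equivalent} to asymptotic stability of this system. So it suffices to exhibit a Lyapunov function proving that $\dot{x}=-\nabla V(x)$ is asymptotically stable.

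Next I would check that $W$ is exactly such a Lyapunov function. By hypothesis $W$ is a positive definite form, hence positive definite and radially unbounded, satisfying (\ref{eq:V.positive}). Its derivative along trajectories of $\dot{x}=-\nabla V(x)$ is
\begin{equation*}
\dot{W}(x)=\langle \nabla W(x),\dot{x}\rangle=-\langle \nabla W(x),\nabla V(x)\rangle,
\end{equation*}
which is negative definite precisely because $\langle \nabla W,\nabla V\rangle$ is assumed positive definite; this is (\ref{eq:Vdot.negative}). By Lyapunov's theorem the system is (globally) asymptotically stable, and then the converse direction of Theorem~\ref{thm:asym.stability.nphard} forces $V$ to be positive definite.

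Alternatively — and this is really the same argument unwound — one can give a one-line self-contained proof: suppose for contradiction $V(x^*)\le 0$ for some $x^*\neq 0$. Running the trajectory from $x^*$, the value $W(x(t))$ is strictly decreasing (since $\dot W<0$ away from the origin) and must converge to $0$ as the trajectory tends to the origin; meanwhile $V(x(t))$ is nonincreasing along the flow because $\dot V=-\|\nabla V\|^2\le 0$, so $V(x(t))\le V(x^*)\le 0$ for all $t\ge 0$, contradicting that $V$ vanishes only at the origin, which the trajectory approaches. Hence $V>0$ off the origin.

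I do not anticipate a real obstacle here: the only things to be careful about are (i) invoking the \emph{equivalence} (not just one direction) between positive definiteness of $V$ and asymptotic stability of $\dot{x}=-\nabla V(x)$, which is exactly what the proof of Theorem~\ref{thm:asym.stability.nphard} establishes, and (ii) noting that a positive definite form is automatically radially unbounded, so that $W$ qualifies as a Lyapunov function certifying \emph{global} asymptotic stability. Everything else is a direct substitution into (\ref{eq:Vdot.negative}).
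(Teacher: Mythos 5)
Your main argument is exactly the paper's intended proof: the corollary is introduced immediately after the reduction precisely on the observation that a positive definite form $W$ with $\langle\nabla W,\nabla V\rangle$ positive definite is a (radially unbounded) Lyapunov function certifying GAS of $\dot{x}=-\nabla V(x)$, whence the converse direction of the equivalence established inside the proof of Theorem~\ref{thm:asym.stability.nphard} gives positive definiteness of $V$. The two points you flag, invoking both directions of that equivalence and the automatic radial unboundedness of a positive definite form, are indeed the only things needing care.

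One remark on the ``alternative'' one-line version: as phrased, ``contradicting that $V$ vanishes only at the origin'' is circular, since that is the conclusion sought. The contradiction should be split as in the theorem's converse direction: if $V(x^*)<0$, then $V(x(t))\le V(x^*)<0$ for all $t$ contradicts $V(x(t))\to V(0)=0$; and once $V\ge 0$ is known, $V(x^*)=0$ for $x^*\neq 0$ forces $\nabla V(x^*)=0$, making $x^*$ a fixed point and contradicting convergence of its trajectory to the origin. Your main argument already handles this correctly by invoking the converse direction of the theorem in full, so this is only an imprecision in the restatement.
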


A polynomial $p$ is said to be a \emph{sum of squares} (sos) if it
can be written as $p=\sum_{i=1}^m q_i^2$ for some polynomials
$q_i$. An sos polynomial is clearly nonnegative. Moreover, unlike
the property of nonnegativity that is NP-hard to check, existence
of an sos decomposition can be cast as a semidefinite
program~\cite{sdprelax}, which can be solved efficiently. However,
not every nonnegative polynomial is a sum of squares.

An interesting fact about
Corollary~\ref{cor:positivity.forms.Lyap} is that its algebraic
version with sum of squares replaced for positivity is not true.
In other words, we can have $W$ sos (and positive definite),
$\langle \nabla W, \nabla V \rangle$ sos (and positive definite),
but $V$ not sos. This gives us a way of proving positivity of some
polynomials that are not sos, using \emph{only} sos certificates.
Given a form $V$, since the expression $\langle \nabla W, \nabla V
\rangle$ is linear in the coefficients of $W$, we can use
semidefinite programming to search for a form $W$ that satisfies
$W$ sos and $\langle \nabla W, \nabla V \rangle$ sos, and this
would prove positivity of $V$. The following example demonstrates
the potential usefulness of this approach.

\begin{example}\label{ex:form.pos.Lyap.inspired}
Consider the following form of degree $6$:
\begin{equation}\label{eq:V.positive.Motzkin}
V(x)=
x_1^4x_2^2+x_1^2x_2^4-3x_1^2x_2^2x_3^2+x_3^6+\frac{1}{250}(x_1^2+x_2^2+x_3^2)^3.
\end{equation}
One can check that this polynomial is not a sum of squares. (In
fact, this is the celebrated Motzkin form~\cite{MotzkinSOS}
slightly perturbed.) On the other hand, we can use the software
package YALMIP~\cite{yalmip} together with the SDP solver
SeDuMi~\cite{sedumi} to search for a form $W$ satisfying
\begin{equation}\label{eq:W.sos.gradW.gradV.sos}
\begin{array} {rl}
W & \mbox{sos} \\
\langle \nabla W, \nabla V \rangle & \mbox{sos.}
\end{array}
\end{equation}
If we parameterize $W$ as a quadratic form, no feasible solution
will be returned form the solver. However, when we increase the
degree of $W$ from $2$ to $4$, the solver returns the following
polynomial
\begin{equation}\nonumber
\begin{array}{lll}
W(x)&=&9x_2^4+9x_1^4-6x_1^2x_2^2+6x_1^2x_3^2+6x_2^2x_3^2+3x_3^4\\
\ &\ &-x_1^3x_2-x_1x_2^3-x_1^3x_3-3x_1^2x_2x_3-3x_1x_2^2x_3\\ \ &\
&-x_2^3x_3-4x_1x_2x_3^2-x_1x_3^3-x_2x_3^3
\end{array}
\end{equation}
that satisfies both sos constraints in
(\ref{eq:W.sos.gradW.gradV.sos}). One can easily infer from the
sos decompositions (e.g. by checking positive definiteness of the
associated ``Gram matrices'') that the forms $W$ and $\langle
\nabla W, \nabla V \rangle$ are positive definite. Hence, by
Corollary~\ref{cor:positivity.forms.Lyap}, we have a proof that
$V$ in (\ref{eq:V.positive.Motzkin}) is positive definite.
$\triangle$
\end{example}

Interestingly, approaches of this type that use gradient
information for proving positivity of polynomials with sum of
squares techniques have been studied by Nie, Demmel, and Sturmfels
in~\cite{Gradient_Ideal_SOS}, though the derivation there is not
inspired by Lyapunov theory.

\section{Non-existence of a uniform bound on the degree of polynomial Lyapunov functions in fixed dimension and
degree}\label{sec:no.finite.bound}

For polynomial vector fields in general, existence of a polynomial
Lyapunov function is not necessary for global asymptotic
stability. In joint work with M. Krstic and P.A.
Parrilo~\cite{AAA_MK_PP_CDC11_no_Poly_Lyap}, we recently gave a
remarkably simple example of a (non-homogeneous) quadratic
polynomial vector field in two variables that is GAS but does not
admit a polynomial Lyapunov function (of any degree). An
independent earlier example that appears in a book by Bacciotti
and Rosier~\cite[Prop. 5.2]{Bacciotti.Rosier.Liapunov.Book} was
brought to our attention after our work was submitted. We refer
the reader to~\cite{AAA_MK_PP_CDC11_no_Poly_Lyap} for a discussion
on the differences between the two examples, the main one being
that the example in~\cite{Bacciotti.Rosier.Liapunov.Book} does not
admit a polynomial Lyapunov function even locally but unlike the
example in~\cite{AAA_MK_PP_CDC11_no_Poly_Lyap} relies on using
irrational coefficients.

The situation for homogeneous polynomial vector fields, however,
seems to be different. We conjecture that for such systems,
existence of a homogeneous polynomial Lyapunov function is
necessary and sufficient for (global) asymptotic stability. The
reason for this conjecture is that we expect that one should be
able to approximate a continuously differentiable Lyapunov
function with a polynomial one on the unit sphere, which by
homogeneity should be enough to imply the Lyapunov inequalities
everywhere. A formal treatment of this idea is left for future
work. Here, we build on the result in~\cite[Prop.
5.2]{Bacciotti.Rosier.Liapunov.Book} to prove that the minimum
degree of a polynomial Lyapunov function for an AS homogeneous
vector field can be arbitrarily large even when the degree and
dimension are fixed respectively to $3$ and $2$.

\begin{proposition}[{~\cite[Prop.
5.2--a]{Bacciotti.Rosier.Liapunov.Book}}]
\label{prop:Bacciotti.Rosier} Consider the vector field
\begin{equation}\label{eq:Bacciotti.Rosier.f0}
\begin{array}{lll}
\dot{x}&=&-2\lambda y(x^2+y^2)-2y(2x^2+y^2) \\
\dot{y}&=&\ \ 4\lambda x(x^2+y^2)+2x(2x^2+y^2)
\end{array}
\end{equation}
parameterized by the scalar $\lambda>0$. For all values of
$\lambda$ the origin is a center for
(\ref{eq:Bacciotti.Rosier.f0}), but for any irrational value of
$\lambda$ there exists no polynomial function $V$ satisfying
$\dot{V}(x,y)=\frac{\partial{V}}{\partial{x}}\dot{x}+\frac{\partial{V}}{\partial{y}}\dot{y}=0.$
\end{proposition}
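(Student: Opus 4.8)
The plan is to first exhibit an explicit first integral of (\ref{eq:Bacciotti.Rosier.f0}), use it to get the center property for all $\lambda>0$, and then show that for irrational $\lambda$ this first integral is ``essentially transcendental,'' so no polynomial one can exist.

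\textbf{A first integral and the center property.} Passing to polar coordinates $x=r\cos\theta$, $y=r\sin\theta$ and using $x^2+y^2=r^2$, $2x^2+y^2=r^2(1+\cos^2\theta)$, a short computation gives
\[
\dot r=\lambda r^3\sin 2\theta,\qquad \dot\theta=2(\lambda+1)\,r^2(1+\cos^2\theta).
\]
Since $\lambda>0$ we have $\dot\theta>0$ for $r\ne0$, so trajectories wind monotonically around the origin. Dividing, $\frac{d}{d\theta}\ln r=-\frac{\lambda}{2(\lambda+1)}\frac{d}{d\theta}\ln(1+\cos^2\theta)$, which integrates to the statement that $r^{2(\lambda+1)}(1+\cos^2\theta)^{\lambda}$ is constant along trajectories; rewritten in Cartesian coordinates this is the first integral
\[
H(x,y)=(x^2+y^2)(2x^2+y^2)^{\lambda},
\]
and one checks directly that $\dot H=\langle\nabla H,f\rangle\equiv0$. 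For $\lambda>0$, $H$ is positive definite and radially unbounded, so its small sublevel sets are compact invariant neighborhoods of the origin; on each level set $\{H=c\}$, $c>0$ (a smooth closed curve encircling the origin) the field is nonvanishing and tangent, and $\dot\theta>0$ forces each such orbit to be periodic. Hence a punctured neighborhood of the origin is filled with periodic orbits: the origin is a center for every $\lambda>0$.

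\textbf{Ruling out polynomial first integrals for irrational $\lambda$.} Suppose $V$ is a polynomial with $\dot V\equiv0$, and write $V=\sum_k V_k$ in homogeneous components. Since the field $f$ is homogeneous of degree $3$, $\dot V_k=\langle\nabla V_k,f\rangle$ is homogeneous of degree $k+2$; these degrees are distinct, so $\dot V_k\equiv0$ for every $k$, and it suffices to show that any homogeneous polynomial first integral $V_d$ of degree $d\ge1$ vanishes. In polar coordinates $V_d=r^d g(\theta)$ with $g$ a trigonometric polynomial of degree $\le d$. Along any orbit $\{H=c\}$ we have $r^{2(\lambda+1)}=c\,(1+\cos^2\theta)^{-\lambda}$, hence $r^d=c^{\,d/(2(\lambda+1))}(1+\cos^2\theta)^{-\alpha}$ with $\alpha:=\frac{d\lambda}{2(\lambda+1)}$; as the quantity $V_dH^{-d/(2(\lambda+1))}=g(\theta)(1+\cos^2\theta)^{-\alpha}$ is homogeneous of degree zero (so a function of $\theta$ alone) and is constant along every orbit while orbits cover all angles, it is a genuine constant $c_0$. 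Thus $g(\theta)=c_0(1+\cos^2\theta)^{\alpha}$, and if $c_0\neq0$ this is the content to contradict.

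\textbf{The obstacle and its resolution.} The crux is that $g$ cannot be a trigonometric polynomial unless $\alpha\in\mathbb{Z}_{\ge0}$. Since $c_0(1+\cos^2\theta)^\alpha=c_0\bigl(\tfrac{3+\cos2\theta}{2}\bigr)^\alpha$ is even and $\pi$-periodic, $g$ is an even $\pi$-periodic trigonometric polynomial, hence $g(\theta)=Q(\cos2\theta)$ for a univariate polynomial $Q$, and $Q(s)=c_0\bigl(\tfrac{3+s}{2}\bigr)^\alpha$ on $[-1,1]$. A polynomial can agree on an interval with $\bigl(\tfrac{3+s}{2}\bigr)^\alpha$ only when the latter is itself a polynomial, i.e.\ when $\alpha\in\{0,1,2,\dots\}$ (for non-integer $\alpha$ the Taylor expansion at, say, $s=-1$ has infinitely many nonzero coefficients; equivalently, complexifying $\theta$, the function $1+\cos^2\theta$ has simple zeros and $(1+\cos^2\theta)^\alpha$ then has a branch point). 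Finally, $\alpha=\frac{d\lambda}{2(\lambda+1)}=m\in\mathbb{Z}_{\ge0}$ means $\lambda(d-2m)=2m$ with $0\le2m\le d$: if $d=2m$ then $m=0$, forcing $d=0$; if $d>2m$ then $\lambda=\frac{2m}{d-2m}\in\mathbb{Q}$. Both contradict $d\ge1$ with $\lambda$ irrational. Hence $c_0=0$, every $V_d$ with $d\ge1$ vanishes, and $V$ is constant; so (\ref{eq:Bacciotti.Rosier.f0}) has no nonconstant polynomial first integral for irrational $\lambda$. I expect the real work to be the bookkeeping that reduces the polynomial case to this one homogeneous question together with the precise characterization of when a power of $1+\cos^2\theta$ is a trigonometric polynomial; the center part is routine once $H$ is in hand.
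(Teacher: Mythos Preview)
The paper does not prove this proposition; it is quoted verbatim from Bacciotti and Rosier's book and used as a black box in the proof of Theorem~\ref{thm:no.finite.bound}. So there is no ``paper's own proof'' to compare against, and your write-up stands as an independent proof of the cited result.

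Your argument is correct. The first integral $H(x,y)=(x^2+y^2)(2x^2+y^2)^\lambda$ is exactly the (non-polynomial) Lyapunov function the paper later uses in the proof of Theorem~\ref{thm:no.finite.bound}, and your polar computation recovering it is clean; $\dot\theta>0$ then gives the center property immediately. For the second part, the reduction to homogeneous components via the degree count, the identification $V_d=r^d g(\theta)$, and the observation that $V_d H^{-d/(2(\lambda+1))}$ is a degree-zero first integral (hence a constant) are all sound. The key step---that $g(\theta)=c_0(1+\cos^2\theta)^\alpha$ forces $\alpha\in\mathbb{Z}_{\ge0}$ if $c_0\neq0$ and $g$ is a trigonometric polynomial---is handled correctly by your analytic continuation argument in the variable $s=\cos2\theta$. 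The arithmetic $\alpha=\tfrac{d\lambda}{2(\lambda+1)}\in\mathbb{Z}_{\ge0}\Rightarrow\lambda\in\mathbb{Q}$ (for $d\ge1$) is fine; note that in fact $0<\alpha<d/2$ automatically, so the case split is even shorter than you wrote.

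One cosmetic point: the proposition as stated literally says ``no polynomial $V$,'' which is false for constants; you correctly interpret and prove the intended statement that there is no \emph{nonconstant} polynomial first integral.
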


\begin{theorem}\label{thm:no.finite.bound}
Let $\lambda$ be a positive irrational real number and consider
the following homogeneous cubic vector field parameterized by the
scalar $\theta$: \scalefont{.82}
\begin{equation}\label{eq:a.s.cubic.vec.arbitrary.high.Lyap}
\begin{pmatrix}\dot{x} \\ \dot{y}\end{pmatrix} =\begin{pmatrix}\cos(\theta) & -\sin(\theta)\\\sin(\theta)&\ \ \  \cos(\theta)  \end{pmatrix} \begin{pmatrix}-2\lambda y(x^2+y^2)-2y(2x^2+y^2) \\
\ \ 4\lambda x(x^2+y^2)+2x(2x^2+y^2)
\end{pmatrix}.
\end{equation} \normalsize
Then for any even degree $d$ of a candidate polynomial Lyapunov
function, there exits a $\theta>0$ small enough such that the
vector field in (\ref{eq:a.s.cubic.vec.arbitrary.high.Lyap}) is
asymptotically stable but does not admit a polynomial Lyapunov
function of degree $\leq d$.
\end{theorem}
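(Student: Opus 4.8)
The plan is to show that the rotated vector field in (\ref{eq:a.s.cubic.vec.arbitrary.high.Lyap}) is asymptotically stable for every small $\theta>0$, and then to exploit a continuity/limiting argument as $\theta\to 0^+$ together with Proposition~\ref{prop:Bacciotti.Rosier} to rule out low-degree Lyapunov functions. First I would establish asymptotic stability: the unrotated field $f_0$ of (\ref{eq:Bacciotti.Rosier.f0}) has the origin as a center, meaning its trajectories are closed curves encircling the origin, and (by Euler/homogeneity and the fact that $f_0$ is tangent to these closed orbits) $x^T f_0(x)$ has a definite sign pattern relating the radial component. Pre-multiplying by a rotation matrix $R(\theta)$ tilts every velocity vector by the fixed angle $\theta$; since $f_0(x)\neq 0$ for $x\neq 0$ and the orbits of $f_0$ are transverse to the radial direction in a controlled way, a small rotation introduces a strictly inward radial component. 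Concretely, I would compute $\tder{}{t}(x^2+y^2)=2x^T R(\theta) f_0(x)$; at $\theta=0$ this is zero (center), and its derivative in $\theta$ at $\theta=0$ equals $2x^T R'(0) f_0(x) = 2x^T J f_0(x)$ where $J=\begin{pmatrix}0&-1\\1&0\end{pmatrix}$, which one checks is negative definite on the unit circle (this is where the specific coefficients $2\lambda$ and $4\lambda$, $2$ and $2$, and the $\lambda>0$ assumption get used). By homogeneity and compactness of the unit circle, there is a $\theta^*>0$ so that for all $0<\theta<\theta^*$ the quantity $x^T R(\theta) f_0(x)$ is negative definite, so $V_{\mathrm{rad}}(x,y)=x^2+y^2$ is itself a (quadratic!) Lyapunov function and the system is GAS.

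The second and main part is the non-existence of a polynomial Lyapunov function of degree $\le d$ for suitable small $\theta$. The idea is a contradiction-via-compactness argument: suppose, for the sake of contradiction, that for every $\theta>0$ there is a polynomial Lyapunov function of degree $\le d$ for the $\theta$-system. Normalize each such Lyapunov function (e.g. scale its coefficient vector to have unit norm; restricting to homogeneous $V$ of even degree $\le d$, the coefficient space is finite-dimensional). Taking a sequence $\theta_k\to 0^+$, by compactness of the unit sphere in coefficient space we can extract a convergent subsequence whose limit is a nonzero polynomial $V^\star$ of degree $\le d$. Passing to the limit in the (non-strict, closed) inequalities $V\ge 0$ and $\dot V = \langle \nabla V, R(\theta_k) f_0\rangle \le 0$, we obtain $V^\star\ge 0$ and $\langle \nabla V^\star, f_0\rangle\le 0$ everywhere. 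Now I would use the center structure of $f_0$: along any closed orbit $\gamma$ of $f_0$, $\oint_\gamma \langle\nabla V^\star, f_0\rangle\, dt = V^\star(\gamma(T))-V^\star(\gamma(0)) = 0$, so a continuous nonpositive function with zero integral over each periodic orbit must vanish identically along every orbit, i.e. $\langle\nabla V^\star, f_0\rangle\equiv 0$ on $\mathbb{R}^2$. But $V^\star$ is a nonzero polynomial, contradicting the conclusion of Proposition~\ref{prop:Bacciotti.Rosier} that no nonzero polynomial $V$ satisfies $\dot V=0$ for $f_0$ when $\lambda$ is irrational. (One must double-check that the proposition, stated for general polynomial $V$, indeed forbids a nonzero polynomial with $\dot V\equiv 0$; it does, since a constant would also have to be excluded by homogenizing or by noting $V^\star$ is a form vanishing at $0$.) Hence for some $\theta>0$ no Lyapunov function of degree $\le d$ exists, while by Part 1 the system is still asymptotically stable — this is exactly the claim, after noting that the value of $\theta$ one ends up with depends on $d$.

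The step I expect to be the main obstacle is making the limiting argument fully rigorous, in two respects. First, one needs the Lyapunov functions to be normalizable and the constraint set to behave well under limits — this is why I would restrict attention to \emph{homogeneous} Lyapunov functions (legitimate, since the system is homogeneous and an AS homogeneous system admitting a polynomial Lyapunov function of degree $\le d$ admits a homogeneous one of degree $\le d$, by extracting the appropriate homogeneous component or by averaging), so that the coefficient space is finite-dimensional and the unit-sphere compactness argument is clean. Second, one must be careful that the limit $V^\star$ is genuinely nonzero and genuinely a Lyapunov-type object: nonzeroness comes for free from the unit-norm normalization, and the inequalities survive the limit because $\ge$ and $\le$ are closed conditions (this is precisely why we cannot hope to keep strict positivity/negativity in the limit — and indeed we don't need to, since the orbit-integral argument upgrades $\dot V^\star\le 0$ to $\dot V^\star\equiv 0$ for free). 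A minor additional point to verify is that the orbits of $f_0$ are all periodic (the origin being a \emph{center}, not merely a focus) so that the closed-orbit integral identity applies on a neighborhood of the origin, and then homogeneity propagates $\langle\nabla V^\star, f_0\rangle\equiv 0$ to all of $\mathbb{R}^2$; I would cite the center claim of Proposition~\ref{prop:Bacciotti.Rosier} for this.
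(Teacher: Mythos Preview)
Your limiting/compactness argument in Part~2 is essentially the paper's own proof: normalize a sequence of homogeneous Lyapunov functions, extract a convergent subsequence as $\theta\to 0^+$, pass to the limit to get a nonzero polynomial $V^\star$ with $\dot V^\star\le 0$ for the $\theta=0$ field, then use periodicity of the orbits to force $\dot V^\star\equiv 0$ and invoke Proposition~\ref{prop:Bacciotti.Rosier}. That part is fine.

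Part~1, however, contains a genuine error. You assert that $x^T f_0(x)=0$ because the origin is a center, and then propose $V_{\mathrm{rad}}=x^2+y^2$ as a quadratic Lyapunov function for the rotated field. But ``center'' means the orbits are closed curves around the origin, not that they are Euclidean circles. A direct computation gives
\[
x\,f_{0,1}(x,y)+y\,f_{0,2}(x,y)=2\lambda\,xy\,(x^2+y^2),
\]
which is not identically zero, so $x^2+y^2$ is \emph{not} conserved by $f_0$ and your perturbation-in-$\theta$ argument collapses at the very first step. In fact, had your argument worked it would have contradicted the theorem you are trying to prove: if $x^2+y^2$ were a valid Lyapunov function for all small $\theta>0$, then already for $d=2$ no such $\theta$ could be found. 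The paper establishes asymptotic stability for all $0<\theta<\pi$ by exhibiting the (necessarily non-polynomial) Lyapunov function
\[
V(x,y)=(2x^2+y^2)^{\lambda}(x^2+y^2),
\]
which is conserved by $f_0$ and whose derivative along the rotated field is $-\sin(\theta)(2x^2+y^2)^{\lambda-1}(\dot x^2+\dot y^2)$. You would need to discover this (or some other genuine Lyapunov function) to complete the argument; the radial function will not do.
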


\begin{proof}
Consider the (non-polynomial) positive definite Lyapunov function
\begin{equation}\nonumber
V(x,y)=(2x^2+y^2)^\lambda(x^2+y^2)
\end{equation}
whose derivative along the trajectories of
(\ref{eq:a.s.cubic.vec.arbitrary.high.Lyap}) is equal to
$$\dot{V}(x,y)=-\sin(\theta)(2x^2+y^2)^{\lambda-1}(\dot{x}^2+\dot{y}^2).$$ Since
$\dot{V}$ is negative definite for $0<\theta<\pi$, it follows that
for $\theta$ in this range, the origin of
(\ref{eq:a.s.cubic.vec.arbitrary.high.Lyap}) is asymptotically
stable.

To establish the claim in the theorem, suppose for the sake of
contradiction that there exists an upper bound $\bar{d}$ such that
for all $0<\theta<\pi$ the system admits a (homogeneous)
polynomial Lyapunov function of degree $d(\theta)$ with
$d(\theta)\leq\bar{d}$. Let $\hat{d}$ be the least common
multiplier of the degrees $d(\theta)$ for $0<\theta<\pi$. (Note
that $d(\theta)$ can at most range over all even positive integers
less than or equal to $\bar{d}$.) Since positive powers of
Lyapunov functions are valid Lyapunov functions, it follows that
for every $0<\theta<\pi$, the system admits a homogeneous
polynomial Lyapunov function $W_\theta$ of degree $\hat{d}$. By
rescaling, we can assume without loss of generality that all
Lyapunov functions $W_\theta$ have unit area on the unit sphere.
Let us now consider the sequence $\{W_\theta\}$ as
$\theta\rightarrow 0$. We think of this sequence as residing in a
compact subset of $\mathbb{R}^{\hat{d}+1\choose \hat{d}}$
associated with the set $P_{2,\hat{d}}$ of (coefficients of) all
nonnegative bivariate homogeneous polynomials of degree $\hat{d}$
with unit area on the unit sphere. Since every bounded sequence
has a converging subsequence, it follows that there must exist a
subsequence of $\{W_\theta\}$ that converges (in the coefficient
sense) to some polynomial $W_0$ belonging to $ P_{2,\hat{d}}$.
Since convergence of this subsequence also implies convergence of
the associated gradient vectors, we get that
$$\dot{W}_0(x,y)=\frac{\partial{W}_0}{\partial{x}}\dot{x}+\frac{\partial{W}_0}{\partial{y}}\dot{y}\leq0.$$
On the other hand, when $\theta=0$, the vector field in
(\ref{eq:a.s.cubic.vec.arbitrary.high.Lyap}) is the same as the
one in (\ref{eq:Bacciotti.Rosier.f0}) and hence the trajectories
starting from any nonzero initial condition go on periodic orbits.
This however implies that $\dot{W}=0$ everywhere and in view of
Proposition~\ref{prop:Bacciotti.Rosier} we have a contradiction.
\end{proof}

\begin{remark}
Unlike the result in~\cite[Prop.
5.2]{Bacciotti.Rosier.Liapunov.Book}, it is easy to establish the
result of Theorem~\ref{thm:no.finite.bound} without having to use
irrational coefficients in the vector field. One approach is to
take an irrational number, e.g. $\pi$, and then think of a
sequence of vector fields given by
(\ref{eq:a.s.cubic.vec.arbitrary.high.Lyap}) that is parameterized
by both $\theta$ and $\lambda$. We let the $k$-th vector field in
the sequence have $\theta_k=\frac{1}{k}$ and $\lambda_k$ equal to
a rational number representing $\pi$ up to $k$ decimal digits.
Since in the limit as $k\rightarrow\infty$ we have
$\theta_k\rightarrow 0$ and $\lambda_k\rightarrow \pi$, it should
be clear from the proof of Theorem~\ref{thm:no.finite.bound} that
for any integer $d$, there exists an AS bivariate homogeneous
cubic vector field with \emph{rational} coefficients that does not
have a polynomial Lyapunov function of degree less than $d$.
\end{remark}

\section{Lack of monotonicity in the degree of polynomial Lyapunov
functions}\label{sec:no.monotonicity.in.degree} If a dynamical
system admits a quadratic Lyapunov function $V$, then it clearly
also admits a polynomial Lyapunov function of any higher even
degree (e.g. simply given by $V^k$ for $k=2,3,\ldots$). However,
our next theorem shows that for homogeneous systems that do not
admit a quadratic Lyapunov function, such a monotonicity property
in the degree of polynomial Lyapunov functions may not hold.

\begin{theorem}\label{thm:no.monotonicity}
Consider the following homogeneous cubic vector field
parameterized by the scalar $\theta$:
\begin{equation}\label{eq:non.monotonicity.vec.field}
\begin{pmatrix}\dot{x} \\ \dot{y}\end{pmatrix} =\begin{pmatrix}-\sin(\theta) & \ \ \cos(\theta)\\-\cos(\theta)&-\sin(\theta)  \end{pmatrix} \begin{pmatrix}x^3 \\
y^3
\end{pmatrix}.
\end{equation}
There exists a range of values for the parameter $\theta>0$ for
which the vector field is asymptotically stable, has no
homogeneous polynomial Lyapunov function of degree $6$, but admits
a homogeneous polynomial Lyapunov function of degree $4$.
\end{theorem}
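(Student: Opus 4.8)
The plan is to let the single quartic form $V(x,y)=x^4+y^4$ dispatch two of the three claims simultaneously, and then to obtain the non-existence of a sextic Lyapunov function from a limiting argument as $\theta\to 0^+$, in the same spirit as the proof of Theorem~\ref{thm:no.finite.bound}.

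First I would write the vector field out as $\dot x=-\sin(\theta)x^3+\cos(\theta)y^3$, $\dot y=-\cos(\theta)x^3-\sin(\theta)y^3$ and compute
$$\dot V=4x^3\dot x+4y^3\dot y=-4\sin(\theta)(x^6+y^6),$$
the cross terms $4\cos(\theta)x^3y^3$ cancelling. Hence for every $\theta\in(0,\pi)$ the form $V$ is positive definite with negative definite derivative, so the origin is asymptotically stable and $V$ is a homogeneous polynomial Lyapunov function of degree $4$. This already settles asymptotic stability and the existence of a quartic Lyapunov function on the whole interval $(0,\pi)$; only the non-existence of a sextic one remains, and it will be proved on a subinterval $(0,\epsilon)$.

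Next I would analyze the boundary case $\theta=0$, where the field becomes $\dot x=y^3$, $\dot y=-x^3$. This is Hamiltonian with conserved quantity $\tfrac14(x^4+y^4)$, so every nonzero trajectory lies on a closed level curve of $x^4+y^4$ and is periodic (the field vanishes only at the origin). Consequently, if $W$ is any polynomial with $\dot W\le 0$, then integrating $\dot W$ around a periodic orbit forces $\dot W\equiv 0$, i.e.\ $W$ is a first integral. But a continuous first integral of $\dot x=y^3,\dot y=-x^3$ is constant on the connected level curves $\{x^4+y^4=c\}$, hence a function of $x^4+y^4$; a homogeneous polynomial of degree $6$ that is such a function would have to be a constant multiple of $(x^4+y^4)^{3/2}$, which is not a polynomial. (Equivalently and completely elementarily: writing $W=\sum_{i=0}^6 a_i x^i y^{6-i}$ and equating the coefficients of $x^jy^{8-j}$, $j=0,\dots,8$, in the identity $W_x y^3=W_y x^3$ forces $a_0=\cdots=a_6=0$.) Thus at $\theta=0$ the only homogeneous sextic $W$ with $\dot W\le 0$ is $W\equiv 0$.

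Finally I would run the compactness argument. Suppose, for contradiction, that there is a sequence $\theta_k\downarrow 0$ for which the corresponding system admits a homogeneous sextic polynomial Lyapunov function $W_k$; after rescaling, assume each $W_k$ has unit area on the unit circle, i.e.\ lies in the set $P_{2,6}$ of nonnegative bivariate sextic forms with this normalization. This set is compact (the area integral is a norm on the finite-dimensional space of sextic forms) and does not contain $0$, so a subsequence of $\{W_k\}$ converges coefficientwise to a nonzero nonnegative sextic form $W_0$. Since coefficientwise convergence implies convergence of the gradients and $\theta_k\to 0$, passing to the limit in $\dot W_k\le 0$ yields $\dot W_0\le 0$ for the $\theta=0$ field, contradicting the previous paragraph. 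Hence there is an $\epsilon>0$ such that for every $\theta\in(0,\epsilon)$ the system is asymptotically stable, has the quartic Lyapunov function $V=x^4+y^4$, but admits no homogeneous polynomial Lyapunov function of degree $6$; this $(0,\epsilon)$ is the asserted range. I expect the main obstacle to be the middle step -- ruling out sextic first integrals at $\theta=0$ -- whose entire point is the arithmetic mismatch between $6$ and the $4$-fold homogeneity forced on any first integral; once that degree obstruction is established, the compactness step transfers it to a neighborhood of $\theta=0$ essentially verbatim as in Theorem~\ref{thm:no.finite.bound}.
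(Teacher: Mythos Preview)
Your proof is correct and follows essentially the same route as the paper: the quartic $V=x^4+y^4$ gives $\dot V=-4\sin\theta\,(x^6+y^6)$, and a compactness argument as $\theta\to 0^+$ reduces the non-existence of a sextic Lyapunov function to showing that the Hamiltonian field at $\theta=0$ admits no nonzero homogeneous sextic $W$ with $\dot W\le 0$ (hence $\dot W=0$). The only minor difference is in that last step: the paper first exploits the symmetry $(x,y)\mapsto(y,-x)$ of the vector field to cut $W$ down to the four even monomials $c_1x^6+c_2x^4y^2+c_3x^2y^4+c_4y^6$ before matching coefficients in $\dot W_0=0$, whereas you either do the full seven-coefficient computation directly or, more conceptually, observe that a sextic first integral would have to be a scalar multiple of $(x^4+y^4)^{3/2}$.
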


\begin{proof}
Consider the positive definite Lyapunov function
\begin{equation}\label{eq:V=x^4+y^4}
V(x,y)=x^4+y^4.
\end{equation}
The derivative of this Lyapunov function is given by
$$\dot{V}(x,y)=-4\sin(\theta)(x^6+y^6),$$ which is negative definite for $0<\theta<\pi$.
Therefore, when $\theta$ belongs to this range, the origin of
(\ref{eq:a.s.cubic.vec.arbitrary.high.Lyap}) is asymptotically
stable and the system admits the degree $4$ Lyapunov function
given in (\ref{eq:V=x^4+y^4}). On the other hand, we claim that
for $\theta$ small enough, the system cannot admit a degree $6$
(homogeneous) polynomial Lyapunov function. To argue by
contradiction, we suppose that for arbitrarily small and positive
values of $\theta$ the system admits sextic Lyapunov functions
$W_\theta$. Since the vector field satisfies the symmetry
\begin{equation}\nonumber
\begin{pmatrix}\dot{x}(y,-x) \\ \dot{y}(y,-x)\end{pmatrix} =\begin{pmatrix}0 & 1\\-1&0  \end{pmatrix} \begin{pmatrix}\dot{x} \\
\dot{y}
\end{pmatrix},
\end{equation}
we can assume that the Lyapunov functions $W_\theta$ satisfy the
symmetry $W_\theta(y,-x)=W_\theta(x,y)$. \footnote{To see this,
note that any Lyapunov function $V_\theta$ for this system can be
made into one satisfying this symmetry by letting
$W_\theta(x,y)=V_\theta(x,y)+V_\theta(y,-x)+V_\theta(-x,-y)+V_\theta(-y,x)$.}
This means that $W_\theta$ can be parameterized with no odd
monomials, i.e., in the form
\begin{equation}\nonumber
W_\theta(x,y)=c_1x^6+c_2x^2y^4+c_3x^4y^2+c_4y^6,
\end{equation}
where it is understood that the coefficients $c_1,\ldots,c_4$ are
a function of $\theta$. Since by our assumption $\dot{W}_\theta$
is negative definite for $\theta$ arbitrarily small, an argument
identical to the one used in the proof of
Theorem~\ref{thm:no.finite.bound} implies that as
$\theta\rightarrow 0$, $W_\theta$ converges to a nonzero sextic
homogeneous polynomial $W_0$ whose derivative $\dot{W}_0$ along
the trajectories of (\ref{eq:non.monotonicity.vec.field}) (with
$\theta=0$) is non-positive. However, note that when $\theta=0$,
the trajectories of (\ref{eq:non.monotonicity.vec.field}) go on
periodic orbits tracing the level sets of the function $x^4+y^4$.
This implies that
$\dot{W}_0=\frac{\partial{W_0}}{\partial{x}}y^3+\frac{\partial{W_0}}{\partial{y}}(-x^3)=0$.
If we write out this equation, we obtain \scalefont{.92}
\begin{equation}\nonumber
\dot{W}_0=(6c_1-4c_2)x^5y^3+2c_2xy^7-2c_3x^7y+(4c_3-6c_4)x^3y^5=0,
\end{equation}\normalsize
which implies that $c_1=c_2=c_3=c_4=0$, hence a contradiction.
\end{proof}

\begin{remark}
We have numerically computed the range $0<\theta<0.0267$, for
which the conclusion of Theorem~\ref{thm:no.monotonicity} holds.
This bound has been computed via sum of squares relaxation and
semidefinite programming (SDP) by using the SDP solver
SeDuMi~\cite{sedumi}. What allows the search for a Lyapunov
function for the vector field in
(\ref{eq:non.monotonicity.vec.field}) to be exactly cast as a
semidefinite program is the fact that all nonnegative bivariate
forms are sums of squares.
\end{remark}

\section{Acknowledgements}
I would like to express my gratitude to my advisor, Pablo Parrilo,
for his very fruitful comments and suggestions on this paper.

\bibliographystyle{unsrt}
\bibliography{pablo_amirali}

\end{document}